\newtheorem{theorem}{Theorem}[section]
\theoremstyle{definition}
\newtheorem{Definition}[theorem]{Definition}
\newenvironment{theorem*}[1]{\medskip
                            \noindent
                            {\bf Theorem #1. }\ %
                            \begingroup \sl}
                            {\endgroup\medskip}
\title{Brouwer fixed point theorem as a corollary of Lawvere}
\author[$\mathrm{M^{\lowercase{c}}Callum}$ ]{\textbf{Rupert} $\mathbf{M^{\lowercase{c}}Callum}$ }
\begin{document}

\maketitle

\begin{abstract} It is investigated in what sense the Brouwer fixed point theorem may be viewed as a corollary of the Lawvere fixed point theorem. A suitable generalisation of the Lawvere fixed point theorem is found and a means is identified by which the Brouwer fixed point theorem can be shown to be a corollary, once an appropriate continuous surjective mapping $A' \rightarrow X^{A''}$ has been constructed for each space $X$ in a certain class of ``nice" spaces for each one of which the exponential topology on $X^{A''}$ exists, and here $A'$ and $A''$ have the same carrier set and the topology on $A'$ is finer than on $A''$. It is shown that there is a certain natural way of attempting to derive Brouwer as a corollary of Lawvere which is not possible, namely that is there is no space $A$ for which the exponential topology on $[0,1]^{A}$ exists and there is a continuous surjection $A \rightarrow [0,1]^{A}$. We then examine the range of contexts in which phenomena like those described in the first result occur, from a broadly model-theoretic perspective, with a view towards applications for the original motivation for the problem as a problem in decision theory for AI systems, suggested by the Machine Intelligence Research Institute. \end{abstract}

\noindent The Brouwer fixed point theorem, whose first published proof was in \cite{Hadamard1910}, states that if $D$ is a closed ball in a finite-dimensional Euclidean space then every continuous mapping $D \rightarrow D$ has a fixed point. The Lawvere fixed point theorem first appeared in \cite{Lawvere1969}. Let us recall the statement of the Lawvere fixed point theorem and its various applications. The theorem states that, in a Cartesian closed category in which there is a point-surjective morphism $A \rightarrow X^{A}$, every endomorphism of $X$ has a fixed point. This is in some sense ``the essence of" all diagonal arguments, having as corollaries Cantor's theorem, the diagonal lemma which is used in the proof of G\"odel's theorem, unsolvability of the halting problem, and existence of a program in any computer language which outputs its own source code.

\bigskip

\noindent It is of interest to know whether the Brouwer fixed point theorem in topology can be recovered as a special case of the Lawvere fixed point theorem; the purpose of this note is to examine various senses in which this is and is not the case.

\bigskip

\noindent One might first naturally ask whether there exists some space $A$, and some class of spaces including every closed ball in a finite-dimensional Euclidean space, with the property that, if $X$ is a space in the class, then the exponential topology on $X^{A}$ exists and there is a continuous surjection $A \rightarrow X^{A}$. We shall later show that it is provable in ZFC that this is not the case even if we just restrict to $X=[0,1]$.

\bigskip

\noindent Suppose two spaces $A'$ and $A''$ could be identified, with the same carrier set and the topology on $A'$ finer than on $A''$, that is to say there is a point-bijective morphism $h: A' \rightarrow A''$. Suppose that some class of spaces could be found, including every closed ball in a finite-dimensional Euclidean space, with the following properties. If $X$ is a space in the class, then the exponential topology on $X^{A''}$ exists and there is a continuous surjection $g:A' \rightarrow X^{A''}$, such that, if $U \subseteq X$ is open, and eval:$X^{A''} \times A'' \rightarrow X$ is the evaluation map, and $\pi_{1}$ is the projection from $X^{A''} \times A''$ onto the first factor, then if we let $V=g^{-1}(\pi_{1}(\mathrm{eval}^{-1}(U)))$, then both $V$ is open in $A'$ and $h(V)$ is open in $A''$. If such a surjection can be found for every space in our class, then the proof of the Lawvere fixed point theorem applies to show that every continuous endomorphism of every space $X$ in the class has a fixed point and so the Brouwer fixed point theorem is recovered as a special case. This shall be our strategy in the next two sections, where we will indeed recover Brouwer as a corollary of an appropriate generalisation of Lawvere in this way. We must begin by defining the appropriate class of spaces.

\section{Defining the appropriate class of topological spaces}

\noindent Let us begin by stating the appropriate generalisation of Lawvere; we must translate the previously given discussion into category-theoretic terms.

\bigskip

\begin{theorem} Suppose that $C$ is a category with a terminal object and closed under products, and $S$ is a class of objects of $C$. Suppose that $A'$ and $A''$ are objects of $C$ with a point-bijective morphism $h:A' \rightarrow A''$. Suppose that, for every $X \in S$, the exponential object $X^{A''}$ exists (exponential object relative to some full Cartesian-closed subcategory of $C$, held fixed throughout). Suppose that for each such $X$ there is a point-surjective morphism $g:A' \rightarrow X^{A''}$, with the property that every morphism $A' \rightarrow X$ which factors through $g \times h:A' \rightarrow X^{A''} \times A''$ also factors through $h:A' \rightarrow A''$. Then every object $X$ is such that every morphism from $X$ to itself has a fixed point. \end{theorem}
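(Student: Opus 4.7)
The plan is to mimic Lawvere's original diagonal argument, with the pairing morphism $\langle g,h\rangle : A'\to X^{A''}\times A''$ playing the role that the diagonal $\langle \mathrm{id},\mathrm{id}\rangle$ plays in the classical statement. Fix $X\in S$ and an endomorphism $f:X\to X$. First I would form
\[
\phi \;=\; f\circ \mathrm{ev}\circ \langle g,h\rangle \,:\, A'\to X,
\]
where $\mathrm{ev}:X^{A''}\times A''\to X$ is the evaluation associated with the exponential. By construction $\phi$ factors through $\langle g,h\rangle$, so the hypothesis of the theorem produces a morphism $\psi:A''\to X$ with $\phi=\psi\circ h$.

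Next I would transpose $\psi$ across the exponential adjunction to obtain a point $p:1\to X^{A''}$ with the property that $\psi$ equals $\mathrm{ev}\circ\langle p\circ!_{A''},\mathrm{id}_{A''}\rangle$ after identifying $A''$ with $1\times A''$. Because $g:A'\to X^{A''}$ is point-surjective, I can choose a point $q:1\to A'$ with $g\circ q = p$. Setting $a := h\circ q : 1\to A''$ and $x := \mathrm{ev}\circ\langle p,a\rangle : 1\to X$, I would then compute $\phi\circ q$ in two ways. On the one hand, $\phi\circ q = \psi\circ h\circ q = \psi\circ a$, and the transpose identity for $\psi$ evaluates $\psi\circ a$ to $x$. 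On the other hand, the definition of $\phi$ together with $g\circ q = p$ and $h\circ q = a$ gives $\phi\circ q = f\circ \mathrm{ev}\circ\langle p,a\rangle = f\circ x$. Combining the two yields $x = f\circ x$, exhibiting a fixed point of $f$ at the generalised element $x:1\to X$.

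The main obstacle I anticipate is bookkeeping around the factoring hypothesis and the transposition, rather than any substantive new idea. The expression ``$g\times h:A'\to X^{A''}\times A''$'' in the hypothesis must be read as the pairing $\langle g,h\rangle$ (since the literal Cartesian product $g\times h$ has domain $A'\times A'$), and one has to check carefully that the $\phi$ constructed above genuinely lies in the class of morphisms the hypothesis constrains. A secondary care-point is that the exponential is only assumed to exist relative to some full Cartesian-closed subcategory of $C$, so one must verify that the terminal object, the points $p$ and $q$, and the transpose of $\psi$ all live in a context where the usual adjunction calculus applies and where $\mathrm{id}_{A''}$, $\langle g,h\rangle$, and $\langle p,a\rangle$ are available. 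Once these pedantic points are dispatched, the classical Lawvere argument runs essentially verbatim, with $h$ contributing only the single step $\phi = \psi\circ h$.
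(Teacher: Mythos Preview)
Your proposal is correct and is exactly the approach the paper has in mind: the paper's entire proof reads ``Same proof as proof of standard Lawvere fixed point theorem,'' and what you have written is precisely that diagonal argument spelled out, with the pairing $\langle g,h\rangle$ in place of the usual diagonal and the factoring hypothesis supplying the descent from $A'$ to $A''$. Your reading of ``$g\times h$'' as $\langle g,h\rangle$ and your remarks about where the transpose lives are both appropriate and do not diverge from the paper's intent.
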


\begin{proof} Same proof as proof of standard Lawvere fixed point theorem. \end{proof}

In order to recover the Brouwer fixed point theorem as a special case of this generalisation of Lawvere, we must define the class of objects $S$, and the objects $A'$ and $A''$ for which we intend to apply it.

\bigskip

\noindent Let $\omega_{1}$ denote the first uncountable ordinal, and consider the complete infinite binary tree of height $\omega_{1}$, viewed as a generalised Cantor space. So this means that the carrier set is the set of all strings of length $\omega_{1}$ of symbols from the set $\{0,1\}$, with the topology for which the basic open sets are sets consisting of all the strings starting with a fixed string of countable length as an initial fragment. This space will be denoted by $A'$. The space with the same carrier set, with the product topology obtained when the space is viewed as a product of $\aleph_1$ many discrete two-element spaces in the natural way, will be denoted by $A''$. Now consider the class $S$ of all spaces $X$ satisfying the following conditions.

\begin{Definition} A topological space $X$ is said to be nice and a member of $S$ if the following hold.

\bigskip

\noindent First, the space $X$ is compact and contractible and is the image of the generalised Cantor space described above under a continuous surjection. Secondly, the space $X$ is the disjoint union of an open dense subset $U$ and the complement $V$, and both $U$ and $V$ admit a ``homogenous" metric, where what we mean by this is as follows. Firstly, with regard to $U$, there exists some $\epsilon>0$, with the property that, for all $\delta$ such that $0<\delta<\epsilon$, every pair of distinct open balls of radius $\delta$ centred at a point in $U$ such that the closure of the balls does not intersect $V$, has the property that the balls in the pair are isometric. Then, with regard to $V$, we require that sufficiently small open balls in $X$, centred at points of $V$, of the same radius, are isometric.

\end{Definition}

\noindent Clearly the class $S$ of nice spaces so defined includes all closed balls in finite-dimensional Euclidean spaces. Since $A''$ and all spaces in $S$ are all $k$-spaces the existence of all the needed exponential topologies is clear (as the category of $k$-spaces is in fact a full Cartesian-closed subcategory of Top).

\bigskip

\noindent In an earlier version of this argument, we thought that the condition of contractibility would end up playing an essential role in the proof. It now appears that the condition involving the existence of a certain type of metric is really the key consideration, and most likely the condition of contractibility follows from this anyway, and is therefore redundant. In any case the inclusion of the condition of contractibility can be dispensed with in what follows; no essential use of that condition will be made.

\bigskip

\noindent In addition, we must show that there is a continuous surjection $A' \rightarrow X^{A''}$ for every nice space $X$, satisfying the requirements given in the statement of the generalised Lawvere fixed point theorem given above. It will then follow by the generalised Lawvere fixed point theorem that every nice space $X$ is such that every continuous function $X \rightarrow X$ has a fixed point, including all closed balls in finite-dimensional Euclidean spaces as a special case, thereby completing the first part of our argument. We can now state our main theorem with regard to the application of the generalisation of Lawvere given above.

\section{Brouwer as a direct corollary of a generalisation of Lawvere}

\begin{theorem} Let $A'$ and $A''$ be as in the previous section, suppose that $X$ is a nice topological space. Then there is a continuous surjection $A' \rightarrow X^{A''}$, where $X^{A''}$ has the exponential topology, satisfying the requirements given in the statement of the generalised Lawvere fixed point theorem stated in the previous section. As a corollary of this, the Brouwer fixed point theorem can be recovered as a corollary of the generalised Lawvere fixed point theorem. \end{theorem}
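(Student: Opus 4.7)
The plan is to produce, for each nice space $X$, a continuous surjection $g\colon A'\to X^{A''}$ satisfying the factoring condition of Theorem 1.1; the Brouwer statement then follows at once, since closed balls in finite-dimensional Euclidean space are nice and the generalised Lawvere theorem applies directly.

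I would first cut $X^{A''}$ down to countable size. The niceness hypotheses force $X$ to be compact metrisable (the homogeneous-metric conditions supply a countable base on $U$ and on $V$, which glue up to $X$), and $X$ is by assumption a continuous image of $A''$. Combining this with the standard dyadic-compactum argument, every continuous $\psi\colon A''\to X$ factors as $\bar\psi\circ\pi_\alpha$ for some countable ordinal $\alpha$ and some continuous $\bar\psi\colon 2^\alpha\to X$. In particular $|X^{A''}|\leq 2^{\aleph_0}$, while $|A'|=2^{\aleph_1}$, so a set-theoretic surjection exists.

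To make $g$ continuous, I would exploit the fact that for any countable $\beta$ the projection $a'\mapsto a'|_\beta$ from $A'$ to $2^\beta$ is continuous when $2^\beta$ is given the \emph{discrete} topology, since $[a'|_\beta]$ is basic open in $A'$. Thus any set map $\sigma\colon 2^\beta\to X^{A''}$ whatsoever lifts to a continuous surjection $g=\sigma\circ(\cdot)|_\beta$, provided $\sigma$ is itself set-surjective. The real content is then in the choice of $\sigma$: it must be engineered so that the factoring condition of Theorem 1.1 is met. My plan is to use the continuous surjection $s\colon A''\to X$ supplied by niceness, together with the local isometries made available by the homogeneous-metric structure on $U$ and $V$, to build $\sigma(t)$ as a ``twisted'' version of $s$, where the twist is indexed by $t\in 2^\beta$ and is designed to unwind precisely on the diagonal $\{a\in A''\colon a|_\beta=t\}$, so that $g(a)(a)$ reduces to a single continuous function of $a$.

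The main obstacle is verifying the factoring condition itself. Following the Lawvere proof, the critical case is $q=f\circ\mathrm{eval}$, and the condition reduces to continuity of the diagonal evaluation $a\mapsto g(a)(a)$ as a map $A''\to X$. The tension is that $g$ is provably \emph{not} continuous from $A''$ to $X^{A''}$, by the impossibility theorem announced in the abstract; yet the diagonal evaluation must nonetheless be continuous. This can only be arranged by positioning the coarse-topology discontinuities of $g$ off the diagonal, which is exactly where the homogeneous-metric hypotheses of niceness carry their weight, and where nearly all the technical work of the proof will reside. The general case of arbitrary continuous $q$ should then reduce to the diagonal case via the exponential adjunction for $k$-spaces, after which an appeal to Theorem 1.1 finishes the argument and in particular recovers Brouwer.
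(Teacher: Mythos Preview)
Your plan has a structural obstruction that no choice of ``twist'' can remove. You propose $g=\sigma\circ(\cdot)|_\beta$ for a \emph{single} countable ordinal $\beta$ and a set-surjection $\sigma\colon 2^\beta\to X^{A''}$, and then hope to engineer $\sigma$ so that the diagonal $\psi(a):=g(a)(a)=\sigma(a|_\beta)(a)$ is continuous from $A''$. But this is impossible whenever $|X|\ge 2$, independently of how $\sigma$ is built. Suppose $\psi$ were continuous. Since $X$ is compact metrisable and $A''=2^{\omega_1}$ carries the product topology, $\psi$ depends on only countably many coordinates and hence factors through some projection $\pi_\delta\colon A''\to 2^\delta$. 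Pick any $\gamma>\max(\beta,\delta)$, fix $x_0\neq x_1$ in $X$, and let $\phi_\gamma\in X^{A''}$ be the continuous map $a\mapsto x_{a(\gamma)}$. By surjectivity of $\sigma$ there is $t\in 2^\beta$ with $\sigma(t)=\phi_\gamma$. Choose $a_0,a_1\in A''$ agreeing on every coordinate below $\gamma$, with $a_0|_\beta=a_1|_\beta=t$ and $a_i(\gamma)=i$. Then $\psi(a_i)=\sigma(t)(a_i)=\phi_\gamma(a_i)=x_i$, yet $a_0|_\delta=a_1|_\delta$, contradicting the factoring of $\psi$ through $\pi_\delta$. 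So the factoring hypothesis of Theorem~1.1 already fails in the one case you single out as critical, and the failure uses only that $\sigma$ is onto; the idea of placing the coarse-topology discontinuities ``off the diagonal'' cannot help.

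The paper avoids this by refusing to fix a single level $\beta$. It instead selects an antichain $D\subseteq 2^{<\omega_1}$ of codes of \emph{unbounded} countable length, so that each $a\in A'$ has a unique initial segment in $D$ whose length is allowed to grow with the complexity of the function being encoded. Codes are not arbitrary labels but structured tuples $(B,\theta,n)$: a countable tree $B$ of bit-strings, a map $\theta\colon B\to X$ taking values among centres of a fixed sequence $\{C_n\}$ of finite covers of $X$ by pairwise isometric balls, and an integer $n$ governing a ``generalised Cauchy'' rate along every branch of $B$. The homogeneous-metric hypothesis on $X$ enters precisely here, to guarantee that such covers exist and that the Cauchy condition can always be met. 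The technical core is then a transfinite induction on the height of the tree showing that every such tuple really does determine a continuous map $A''\to X$; once that is established, the design of $D$ makes the required openness in both topologies fall out. Your cardinality bound $|X^{A''}|\le 2^{\aleph_0}$ is correct but misleading: it invites a fixed-level coding, which is exactly what the diagonal argument above rules out.
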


\begin{proof}

\noindent We must construct the continuous surjection $g: A' \rightarrow X^{A''}$ on the stated hypotheses. Suppose that $x \in A'$; note that $x$ can be thought of as a bit-string of length $\omega_{1}$, and we must describe how to choose $g(x)$. Let us begin by making some observations about how one might code for an element of $X^{A''}$.

\bigskip

\noindent It is evident that given that $X$ is a nice space, the space $X$ has the cardinality of the continuum. It is also possible to construct an $\omega$-sequence $T:=\{C_{n}:n \in \omega\}$ of coverings of $X$ by finitely many open balls, each covering $C_{n} \in T$ being such that it can be partitioned into two collections of open balls with each collection having the property that all of the balls in it are pairwise isometric, and also such that the mesh of the covering $C_{n}$ tends towards zero as $n$ goes to infinity. Now suppose that we have a mapping which sends each point of a countable subset $C \subseteq A''$ to a centre of some open ball appearing in some $C_{n} \in T$, with every centre of every such open ball appearing in the range of the mapping. If an extension of this mapping to a continuous element of $X^{A''}$ exists, there will be some countable ordinal $\alpha$, with the property that, given any bit-string of length $\alpha$ and considering the set of all elements of $A''$ which have this bit-string as an initial fragment, the extension in question will be constant on this set.

\bigskip

\noindent So we see from this that a coding scheme can be constructed whereby every element of $X^{A''}$ can be coded for by a countable bit-string (not necessarily unique given the initial choice of element of $X^{A''}$). First we describe the general form that the data to be used in this coding will take, where again, data of this form can always be constructed for any given element of $X^{A''}$, but not uniquely. Consider a map $\theta$ whose domain $B$ is a countable collection $B$ of countable bit-strings, closed under taking initial fragments, and such that every element of $2^{\omega_{1}}$ has a fragment which is the union of a branch of $B$, and whose co-domain is $X$. We can further require that in the case of bit-strings of zero or successor length, the value of the map at these bit-strings is always a centre of an open ball from some $C_{n} \in T$. Next, use the axiom of choice to construct a function $\rho$ defined on the set of all countable limits of limit ordinals, whose value at each such ordinal $\alpha$ is an $\omega$-sequence of limit ordinals cofinal in $\alpha$. We can now also make the further requirement that given any branch in $B$, the trace of the mapping $\theta$ along this branch, a well-ordered countable sequence of elements of $X$ is ``generalised Cauchy of degree $n$" for some positive integer $n$ which is the same for all branches. What this means is that, for each fragment of the branch of limit length, we obtain an $\omega$-sequence of ordinals cofinal in the length of the fragment, either in the obvious way, if it is a limit ordinal which is not a limit of limit ordinals, or via the previously constructed function $\rho$ otherwise, and then we require that the trace of the mapping $\theta$ along the nodes of the fragment of the branch of $B$ indexed by this $\omega$-sequence of ordinals, is Cauchy relative to the metric on $X$ which we have been holding fixed throughout, and with the speed of convergence having a uniform lower bound determined by $n$, say for example that if $m, m' \geq k$ and $x_m, x_m' \in X$ are the elements of $X$ corresponding to the $m$-th and $n$-th ordinals in the sequence then $d(x_m, x_m')<\frac{1}{2^{n+k}}$ where $d$ is the metric on $X$. (We are requiring $n$ to be positive for the moment, but shall later need to generalise to situations where we have the same criterion with $n$ allowed to be zero or negative.) Let $E$ be the set of all tuples $(B,\theta,n)$ where $B, \theta$ and $n$ are as described above. We now wish to describe a coding scheme whereby every such tuple can be coded for by a countable well-ordered bit-string.

\bigskip

\noindent The integer $n$ can clearly be coded for by a finite bit-string. Each point of $X$ which is a centre of one of the open balls appearing in one of the coverings $C_{n}$ can be coded for by a finite bit-string according to some fixed coding scheme, and in the case where we are dealing with an arbitrary point of $X$, our requirements entail that there is no need to include a code for this point of $X$, since it will be possible to infer it from data occurring earlier in the bit-string that is coding for our tuple $(B,\theta,n)$. The original sequence $T=\{C_{n}:n\in\omega\}$ of coverings can be chosen in such a way that it is indeed always possibile to be able to satisfy the ``generalised Cauchy criterion" at each limit stage of each branch of $B$; this is a consequence of the homogeneity assumptions we made on the metric.

\bigskip

\noindent Thus, our coding scheme will be such that under the coding scheme a countable well-ordered bit-string codes for a positive integer $n$ together with a map $\theta$ from a countable collection $B$ of countable well-ordered bit-strings into points of $X$ with the constraints described before, where the only points of $X$ that actually need coding can be coded for by finite bit-strings.

\bigskip

\noindent The exact details of the coding scheme are not all that important but we shall introduce a couple of extra requirements on it later on which will be easily seen to be possible to fulfil. Firstly, we will want to require that the set $D$ of countable well-ordered bit-strings which can serve as codes in the coding scheme is such that no two countable bit-strings in $D$ are such that one is a fragment of the other, every bit-string of length $\omega_1$ has a bit-string in $D$ as a fragment, and every bit-string in $D$ is infinite. It is clear that this requirement can be fulfilled. Then our energies will be occupied with showing that every element of $E$ does code for an element of $X^{A''}$, and at that point it will be clear that a continuous surjection $A'\rightarrow X^{A''}$ can be constructed from the coding scheme in a natural way (namely, given an element of $A'$ which is a bit-string of length $\omega_1$, find the unique element $b \in D$ which is a fragment of it, and map the element of $A'$ to the element of $X^{A''}$ corresponding to the element of $E$ which is coded for by $b$). We shall want the coding scheme to be constructed not only in such a way as to satisfy the previously given requirements, but also in such a way that the surjection constructed from the coding scheme in the obvious way meets the requirements stated in the statement of the generalised Lawvere fixed point theorem, relative to the space $X$. That is to say, if one takes an open subset of $X$, then takes the pre-image in $X^{A''} \times A''$ under the evaluation map, the projection of that onto the first factor, and then the pre-image of that under the surjection $A' \rightarrow X^{A''}$, which one views as both a subset of $A'$ and $A''$, this set must be open relative to both topologies. In the case of $A'$, the basic open sets for the topology are countable intersections of sets whose defining condition is given by specifying the value of just one bit at the $\alpha$-th point in the sequence for some countable ordinal $\alpha$, whereas in the case of $A''$, the basic open sets are finite intersections of such sets. We must require that the set we obtain be open in the latter topology (the former one being finer). Once we have established that every element of $E$ does indeed code for an element of $X^{A''}$, the possibility of constructing the coding scheme in such a way that this requirement is fulfilled easily follows from the details already given.

\bigskip

\noindent Suppose again that $(B,\theta,n) \in E$. The values of the mapping $\theta$ at the ends of the branches of $B$ determine a mapping $A'' \rightarrow X$ in an obvious way, and clearly for every element of $X^{A''}$ there is indeed at least one element of $E$ that represents it under this coding scheme. We shall say more presently of how we might be able to ensure that the induced mapping $A'' \rightarrow X$ so defined is continuous if we are starting with an arbitrary element of $E$.

\bigskip

\noindent Given any element of $E$, there exists a countable ordinal $\alpha$ such that the mapping $\theta$ coded for by the said element of $E$ extends uniquely to a mapping $2^{\leq\alpha}\rightarrow X$, satisfying the aforementioned generalisation of the Cauchy criterion along each fragment of a branch of limit length. Uniquely, that is, if we introduce the extra constraint that for elements of $2^{\leq\alpha}$ not in $B$ the mapping has the same value as for the union of the branch of $B$ consisting of fragments which are in $B$. What we now need to see is that, given the hypothesis of compactness of $X$, and the ``generalised Cauchy criterion" on the mapping $\theta$, every element of $E$ does indeed determine an element of $X^{A''}$, that is the function from $A''$ into $X$ which is thereby defined is indeed continuous.

\bigskip

\noindent Using our chosen coding scheme, every element of $E$ clearly gives rise to a mapping $2^{\leq\alpha}\rightarrow X$, for an appropriately chosen countable ordinal $\alpha$, as previously noted, and proving continuity of the restriction of this to $2^{\alpha}$ (with the obvious product topology on $2^{\alpha}$) is sufficient. Suppose that $\beta\leq\alpha$ and consider the induced mapping $2^{\beta} \rightarrow X$. We must show by transfinite induction that this is always continuous, relative to the generalised Cantor space topology on $2^{\beta}$, given the hypothesis of compactness and contractibility on $X$ and the generalised Cauchy criterion on $\theta$. So, suppose that $\beta \leq \alpha$ and the desired conclusion has been established for all ordinals less than $\beta$. Clearly we only need to consider the case where $\beta$ is a limit ordinal, and we have a mapping $2^{\leq\beta}\rightarrow X$ available, which we shall use freely in what follows, satisfying the generalised Cauchy criterion which we have introduced.

\bigskip

\noindent The induction hypothesis states that the map $2^{\gamma} \rightarrow X$ is continuous for all $\gamma<\beta$, and since $X$ is compact, this holds with a modulus of uniform continuity depending only on $\gamma$. Here the modulus of uniform continuity can be a constant of Lipschitz continuity relative to any metrics on $2^{\gamma}$ and $X$ that we wish. Naturally the metric on $X$ that we wish to employ is the one that we have been holding fixed all along, and the choice of metric on $2^{\gamma}$ does not matter as long as it is compatible with the obvious product topology on $2^{\gamma}$, and chosen so that there is a choice of Lipschitz constant which works for all $\gamma<\beta$. From this consequence of the induction hypothesis together with our ``generalised Cauchy criterion", it can be concluded that the map $2^{\beta} \rightarrow X$ is continuous and so now our transfinite induction goes through, yielding the final conclusion that the mapping $2^{\alpha} \rightarrow X$ is continuous, and therefore that the induced mapping $A'' \rightarrow X$ is. So this means that our coding scheme is indeed such that every element of $E$ does indeed give rise to an element of $X^{A''}$, and conversely every element of $X^{A''}$ arises from (at least one) element of $E$. Now we need to construct a coding scheme whereby elements of $E$ can be coded for by countable well-ordered bit-strings from an appropriate collection $D$, satisfying all the previously mentioned constraints. With it now established that every element of $E$ does indeed code for an element of $X^{A''}$ there is no difficulty in seeing that all the constraints on the coding scheme can indeed be satisfied.

\bigskip

\noindent The existence of a surjection $A' \rightarrow X^{A''}$ with all of the desired properties is now clear. It follows by the generalised Lawvere fixed point theorem that every nice space $X$ is such that every continuous endomorphism has a fixed point. Thus the Brouwer fixed point theorem can be recovered as a corollary of this generalisation of Lawvere.

\end{proof}

\section{Brouwer as a direct corollary of ordinary Lawvere is not possible}

\noindent It is natural to ask whether there exists a space $A$ with the property that for all spaces $X$ in an appropriate class, the exponential topology on $X^{A}$ exists and there is a continuous surjection $A \rightarrow X^{A}$. Establishing this for a class that included every closed ball in a finite-dimensional Euclidean space would give us a way of recovering the Brouwer fixed point theorem as a direct corollary of the ordinary Lawvere fixed point theorem. But in fact we shall now see how to prove in ZFC that even just in the case $X=[0,1]$ there is no such space.

\bigskip

\begin{theorem} It is provable in ZFC that there does not exist any topological space $A$ for which the exponential topology on $[0,1]^{A}$ is defined and such that there is a continuous surjection $g:A \rightarrow [0,1]^{A}$. \end{theorem}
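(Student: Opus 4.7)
The plan is to argue by contradiction: suppose $g: A \rightarrow [0,1]^{A}$ is a continuous surjection for which the exponential topology on $[0,1]^{A}$ is defined. I would first observe that the naive Lawvere-style diagonal construction does not by itself yield a contradiction here; continuity of evaluation gives a continuous $\Phi: A \rightarrow [0,1]$ defined by $\Phi(a) = g(a)(a)$, and the standard diagonalization shows that for every continuous $\alpha:[0,1]\rightarrow[0,1]$ there is some $a_{\alpha}\in A$ with $\Phi(a_{\alpha}) \in \mathrm{Fix}(\alpha)$; but since $[0,1]$ already has the fixed point property by Brouwer, this conclusion is vacuous. The proof must therefore turn on topological invariants of $A$ as compared to those of the function space $[0,1]^{A}$.

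Second, I would use that the hypothesis that the exponential topology exists forces $A$ to be core-compact (locally compact Hausdorff in the Hausdorff case), and then $[0,1]^{A}$ is identified with $C(A,[0,1])$ equipped with the compact-open topology. The heart of my argument would then be to extract a contradiction from preservation of compactness-type invariants under $g$. If $A$ is compact, then $g(A) = [0,1]^{A}$ is compact, contradicting Arzel\`a--Ascoli for $A$ infinite (the unit ball fails to be equicontinuous) and contradicting $|A| < |[0,1]^{A}|$ for $A$ finite. More generally, if $A$ is $\sigma$-compact then $[0,1]^{A}$ would have to be a countable union of compacta, which is impossible: any compact subset of $C(A,[0,1])$ is equicontinuous on every compact subset of $A$, and one can diagonalize across any prescribed countable family of moduli of continuity to produce a continuous function lying outside such a union.

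The main obstacle I expect to face is the residual case of $A$ locally compact Hausdorff but not $\sigma$-compact. Here I would split further. For $A$ of large density character (for instance uncountable discrete, or uncountable disjoint unions of compacta) the comparison $|C(A,[0,1])| > |A|$ succeeds by K\"onig-type inequalities and already rules out any set-theoretic surjection. For $A$ countably compact but non-compact (such as $[0,\omega_{1})$ with the order topology), the countably compact $A$ cannot continuously surject onto $C(A,[0,1])$, since step-function-like constructions on an appropriate compact initial segment, extended back to $A$, supply an infinite closed discrete subspace of $C(A,[0,1])$ that obstructs countable compactness of the target. Completing the argument in ZFC requires a single uniform treatment covering every remaining locally compact Hausdorff space that is neither $\sigma$-compact nor handled by cardinality nor by countable compactness, and producing such a uniform treatment is the principal technical point.
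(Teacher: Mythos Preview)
Your approach is entirely different from the paper's, and as written it carries a substantial gap that you yourself flag.

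The paper does not argue topologically at all. It exploits the \emph{constructive} character of the Lawvere diagonal: if a pair $(A,g)$ as in the statement existed, then for each continuous $f:[0,1]\to[0,1]$ the Lawvere construction would explicitly manufacture a fixed point of $f$ from the data $(A,g,f)$. The paper passes to an inner model, uses a Skolem-hull argument to bound the complexity of a witness $(A',g')$ so that it lands in some $L_{\alpha}(\mathbb{R})$ with $\alpha$ countable, and then invokes Simpson's reverse-mathematical analysis (equivalence of the Brouwer fixed point theorem with $\mathrm{WKL}_0$ over $\mathrm{RCA}_0$) to build an $\omega$-model $S$ of $\mathrm{RCA}_0$ that contains codes for $(A',g')$ but in which $\mathrm{WKL}_0$ fails. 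Simpson's construction then supplies, inside $S$, a continuous $f:[0,1]\to[0,1]$ with no fixed point in $S$ --- contradicting the constructive Lawvere computation, which ought to go through inside $S$. The argument thus lives in recursion theory and set theory, and deliberately sidesteps any case split on the topology of $A$.

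Your case analysis is plausible for compact and $\sigma$-compact $A$, but the ``residual'' case you isolate is not a thin tail: it contains arbitrary coproducts and many products of spaces already treated, long-line-type objects, and a zoo of non-metrizable locally compact spaces, with no single invariant in sight that uniformly obstructs a continuous surjection onto $C(A,[0,1])$. Two further issues compound the difficulty. First, the statement does not assume $A$ is Hausdorff, so you must work with core-compactness proper; in that generality the exponential need not carry the compact-open topology, and the Arzel\`a--Ascoli apparatus you rely on is not directly available. Second, the cardinality step is more delicate than a bare K\"onig inequality: one can make $|A|$ arbitrarily large while keeping $C(A,[0,1])$ small (spaces with few open sets), so ``large density character'' does not by itself yield $|C(A,[0,1])|>|A|$ without additional structural input. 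Until the residual case is closed by a genuinely uniform mechanism --- and it is not at all clear that purely topological invariants can do this --- what you have is an outline rather than a proof; the paper's detour through $\mathrm{RCA}_0$/$\mathrm{WKL}_0$ is precisely a device for avoiding such a case split.
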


\begin{proof} Suppose that the space $A$ and the surjection $g$ exist in $V$ and define $B:=(A,g)$. Consider the inner model $M:=L(\mathbb{R})[B]$. A Skolem hull argument shows that we can find a $B'=(A',g')$ with the same properties relative to $L(\mathbb{R})$, such that $B' \in L_{\alpha}(\mathbb{R})$ for some countable limit $\alpha$. Let $\rho$ be the function constructed earlier, restricted to $\alpha$. There exists a real number $r$ such that each continuous endomorphism $f$ of $[0,1]$ in $L[B',\rho]$ is in $L_{\alpha}[r]$. We follow arguments given by Stephen Simpson in \cite{Simpson1999} for equivalence of Brouwer's fixed point theorem with $WKL_{0}$ over $RCA_{0}$. Choose a structure $S$ for the second-order language of arithmetic which is an $\omega$-model and which includes a code for every element of $L_{\alpha}[r]$ in the domain of the number variables and items coding for the structure $L_{\alpha}[r]$ relative to this coding scheme appear in the domain of the set variables, and such that the structure is a model for $RCA_{0}$ but not $WKL_{0}$. This is possible by the model-theoretic results about $RCA_{0}$ and $WKL_{0}$ presented in Chapter VIII of \cite{Simpson1999}; namely, one selects the least ``Turing ideal" in $\mathcal{P}(\omega)$ which satisfies the constraints just given, and this will model $RCA_{0}$ but not $WKL_{0}$. Then, following the proof of Theorem IV.7.7 in \cite{Simpson1999}, construct a continuous endomorphism of $[0,1]$ in $S$ which has no fixed point in $L_{\alpha}[r]$. But this is a contradiction because of the constructive nature of the proof of the Lawvere fixed point theorem. If $B'$ occurs in $L_{\alpha}[r]$ and therefore has its transitive closure (as an $\in$-structure) fully coded for in $S$, and $B'$ has the stated properties relative to $L(\mathbb{R})$, then there should be no difficulty in the proof of an existence of a fixed point in $S$, but in fact existence of the fixed point fails in $S$.

\end{proof}

\noindent This proof basically says that, since the Lawvere fixed point theorem is constructive, the non-constructive nature of any procedure for constructing the fixed point in any proof of the Brouwer fixed point theorem would have to be in some sense ``entirely coded for" by the surjection $g$. This can work where $g:A' \rightarrow X^{A''}$ for two spaces $A'$ and $A''$ with the same carrier set and different topologies, but not when $A'$ and $A''$ are the same space.

\section{The original motivation for the problem}

\noindent The Machine Intelligence Research Institute originally became interested in this problem motivated by concerns in decision theory. Suppose that we have an agent with two actions $A$ and $B$ available to them, and they must choose a probability distribution for which action to perform which depends in a continuous way on which observation they make from the space $X$ of all possible observations. Then the set of policies available is $[0,1]^{X}$ (let us assume that the exponential topology does indeed exist). It may be that the agent is observing another agent in the environment so the space $X$ of all possible observations is equal to the space of all possible agents. If each agent has a well-defined policy, then there is a mapping $X \rightarrow [0,1]^{X}$ which we might reasonably require to be continuous. It is of interest from a decision-theoretic point of view to know whether for some spaces $X$ it is possible for such a continuous mapping to be surjective.

\bigskip

\noindent We have obtained the answer ``No, such spaces do not exist, but if you are happy with there being a finer topology on the $X$ that occurs on the left than on the $X$ that occurs in the exponent on the right, then yes the existence of such a continuous surjection is possible". We were dealing with spaces of cardinality $2^{\aleph_1}$ which are perhaps too big to be plausible candidates for practical applications, so it may be of interest to explore whether other examples can be given of more modest cardinality. Let us attempt to survey the range of examples that can occur, looking at it from the perspective of model theory.

\bigskip

\noindent Suppose that $S \subseteq \mathcal{P}(\omega)$ is a Turing ideal. Countable well-ordered bit-strings can be coded for by elements of $\mathcal{P}(\omega)$, and a certain set $A$ of countable well-ordered bit-strings can be coded for by elements of $S$. Note that the supremum of the lengths of bit-strings in $A$ may be strictly less than the $\omega_1$ of ``the real world". Consider the $\omega$-model for the second-order language of arithmetic determined by $S$. The set $A$ can clearly be coded as a ``definable sub-class" of the range of the second-order variables on this structure. The ``generalised Cantor space topology" and ``compact product topology" on $A$ are likewise both clearly definable by means of ``iterated predicative comprehension" on this structure. However, if $RCA_{0}$ is our base theory then we will not go beyond being conservative over $RCA_{0}$ by introducing iterated predicative comprehension for higher types. Clearly, some additional axiom is needed along the lines of ``an uncountable subtree of the complete binary tree of height $\omega_1$, every level of which is countable, has an uncountable branch". (Of course, an investigation from the point of view of Reverse Mathematics naturally suggests itself here.)

\bigskip

\noindent If we introduce an axiom like that in the third-order part of the language, then the desired surjection $g:A' \rightarrow X^{A''}$ will be found in the domain of the third-order variables. But of course there is no guarantee that every real number recursively constructible from $g$ occurs in the original Turing ideal $S$. But if $S$ is chosen to have the appropriate closure properties, then this will be the case and then an appropriate ``space of agents" can be constructed from $S$ which has the desired properties relative to a particular model (and also relative to any strictly larger Turing ideal with the same closure properties, including all of $\mathcal{P}(\omega)$). Relative to such a model you will be able to find a ``space of agents" (possibly countable ``in the real world") and a continuous surjection (relative to the model, and also many strictly larger models as just indicated) from the space of agents onto the space of those policies with a certain upper bound on their complexity. The ``space of agents" is also uniformly definable across the whole class of models for which the construction works, and the bound on complexity of the bit-strings occurring in the space of agents can be strictly larger than the bound on complexity of the ordinals which index the entries in the bit-string. So you can indeed enlarge the space of agents so that it has cardinality $2^{\aleph_0}$ and arbitrary elements of $\mathbb{R}$ are definable from the bit-strings that occur in it, but the gap between the upper bound on complexity of elements of $X^{A''}$ and the complexity of $g$ will still hold.

\bigskip

\noindent The hierarchy of complexity can be refined further, going to considerations of computational complexity rather than just descriptive complexity. We can go to a weakened version of $RCA_{0}$ which only allows for elementary recursive comprehension and then substitute ``Turing reducibility" with ``Turing reducibility via elementary recursive functions". Or for ``elementary recursive" one could substitute ``polynomial-time computable". (In these cases, clearly the height of the binary tree $A$ would become considerably shorter. I previously thought that this kind of considerations might lead to situations where the height of the binary tree is only a recursive ordinal but Alex Mennen has sketched a proof that this is not the case and one only obtains shortening of the height to $\omega_{1}^{CK}$, thereby possibly reducing the interest of these considerations.)

\bigskip

\noindent So, for example, one could have a function from the space of agents to the space of polynomial-time computable policies, but without any possibility of organising things so that every agent is coded for by a real number that is itself polynomial-time computable, or so that the function from the space of agents to the space of polynomial-time computable policies is polynomial-time computable. This starts to sound a bit more like the kind of application that would be of interest from the point of view of the study of artificial intelligence. The Machine Intelligence Research Institute may wish to explore further ramifications of the results presented here along the lines just suggested.

\end{document}